\theoremstyle{plain}
\newtheorem{theorem}{Theorem}[section]
\newtheorem{question}[theorem]{Question}
\theoremstyle{definition}
\newtheorem{example}[theorem]{Example}
\newtheorem{subsec}[theorem]{}
\newcommand{\C}{{\mathds C}}
\newcommand{\R}{{\mathds R}}
\newcommand{\Q}{{\mathds Q}}
\newcommand{\ov}{\overline}
\newcommand{\into}{\hookrightarrow}
\newcommand{\labelt}[1]{\xrightarrow{\makebox[1.2em]{\scriptsize ${#1}$}}}
\newcommand{\longisoto}{{\ \labelt{\raisebox{-1.ex}{$\sim$}}\ }}
\newcommand{\isoto}{\longisoto}
\newcommand{\hs}{\kern 0.8pt}
\newcommand{\hssh}{\kern 1.2pt}
\newcommand{\hshs}{\kern 1.6pt}
\newcommand{\hssss}{\kern 2.0pt}
\newcommand{\hm}{\kern -0.8pt}
\newcommand{\hmm}{\kern -1.2pt}
\newcommand{\emm}{\bfseries}
\newcommand{\loc}{{\rm loc}}
\newcommand{\Gal}{{\rm Gal}}
\newcommand{\SU}{{\rm SU}}
\newcommand{\diag}{{\rm diag}}
\newcommand{\V}{\mathcal{V}}
\newcommand\Hon{{\rm H}^1\hm}
\newcommand{\Zl}{{\rm Z}}
\newcommand{\functor}{{\,\rightsquigarrow\,}}
\newcommand{\uptau}{{\hs^\tau\!}}
\newcommand{\Grp}{{\mathcal{G}rp}}
\newcommand{\PSet}{{\mathcal{PS}et}}
\newcommand{\Tori}{{\mathcal{T}\!\hm or}}
\newcommand{\Red}{{\mathcal{R}ed}}
\begin{document}

\title[Group structure on Galois cohomology]
{Is there a group structure on the Galois \\ cohomology of a reductive group \\ over a global field?}

\author{Mikhail Borovoi}
\address{%
Raymond and Beverly Sackler School of Mathematical Sciences,
Tel Aviv University,
6997801 Tel Aviv,
Israel}
\email{borovoi@tauex.tau.ac.il}

\thanks{This research was partially supported
by the Israel Science Foundation (grant 1030/22).}

\keywords{Galois cohomology, reductive groups, number fields,  global  fields}

\subjclass{
11E72
, 20G10
, 20G20
, 20G30
}

\begin{abstract}
Let $K$ be a global field, that is, a number field or a global function field.
It is known that the answer to the question in the title over $K$  is ``Yes" when $K$ has no real embeddings.
We show that otherwise the answer is ``No''.
Namely, we show that when $K$ is a number field admitting a real embedding,
 it is impossible to define a group structure on the first Galois cohomology sets $\Hon(K,G)$
 for all reductive $K$-groups $G$ in a functorial way.
\end{abstract}

\maketitle

\section{Introduction}
\label{s:intro}

Let $G$ be a reductive algebraic group defined over a field $K$.
Here and throughout this note we follow the convention of SGA3
where reductive groups are assumed to be connected.
We are interested in the first Galois cohomology set $\Hon(K, G)$;
for the definition see Serre \cite[Sections I.5 and III.1]{Serre}  or Section \ref{s:generalities} below.

When $G$ is abelian (a $K$-torus), the product of two 1-cocycles is a 1-cocycle,
and so $\Hon(K,G)$ has a natural structure of an abelian group.
We ask whether $\Hon(K,G)$ can be endowed with a functorial group structure
 for all reductive $K$ groups, not necessarily abelian.

We denote by $\Grp$ the category of groups,
and by $\PSet$ the category of pointed sets.
For a field $K$, we denote by $\Tori_K$ the category of $K$-tori,
and by $\Red_K$ the category of  reductive $K$-groups.

Consider the functor
\begin{equation}\label{e:Red-PSet}
\Red_K\functor \PSet,\quad\ G\mapsto \Hon(K,G)
\end{equation}
and its restriction to the category of $K$-tori
\begin{equation}\label{e:Tori-PSet}
\Tori_K\functor \PSet,\quad\ T\mapsto \Hon(K,T).
\end{equation}
Since a $K$-torus is an abelian $K$-group, we have a functor to the category of groups
\begin{equation}\label{e:Tori-Grp}
\Tori_K\functor \Grp,\quad\ T\mapsto \Hon(K,T)
\end{equation}
from which the functor \eqref{e:Tori-PSet} can be obtained by applying the forgetful functor.
We address the following question:

\begin{question}\label{q:group-structure}
For a given field $K$, is it possible to extend the functor \eqref{e:Tori-Grp} to a functor
\begin{equation*}
\Red_K\functor \Grp
\end{equation*}
from which \eqref{e:Red-PSet} can be obtained by applying the forgetful functor?
\end{question}

In other words, is it possible to construct a group structure
on the pointed set $\Hon(K,G)$ for each reductive $K$-group $G$,
with the following properties?
\begin{itemize}
\item  The unit element of the resulting
group is the neutral element of $\Hon(K,G)$;
\item for any homomorphism of reductive $K$-groups $\varphi\colon G\to G'$,
 the induced map $\varphi_*\colon \Hon(K,G)\to \Hon(K,G')$ is a group homomorphism;
\item for any $K$-torus $T$, the
new group structure coincides with the
standard group structure on $\Hon(K, T)$.
\end{itemize}
If Question~\ref{q:group-structure} has a positive answer for some field $K$, then
for any homomorphism $\psi\colon T\to G$ from a $K$-torus $T$
to a reductive $K$-group $G$, the induced map $\psi_*\colon \Hon(K,T)\to \Hon(K,G)$
is a group homomorphism with respect to the standard group structure on $\Hon(K,T)$
and the new group structure on $\Hon(K,G)$.

The answer to Question~\ref{q:group-structure} is trivially ``Yes'' when $K$ is a finite field,
because then by Lang's theorem \cite{lang-56} we have $\Hon(K,G)=1$ for all connected $K$-groups $G$.
However, one would expect the answer  ``No''  in most other cases,
because reductive groups in general are non-abelian,
and therefore the product of two cocycles may not be a cocycle.
Surprisingly, the answer to Question~\ref{q:group-structure} is ``Yes"
for any non-archimedean local field $K$;
see \cite[Corollary 5.4.1]{Borovoi-Memoir} in characteristic 0,
and \cite[Theorem 6.5.2(1)]{BK} in arbitrary characteristic.
Moreover,  the answer is also ``Yes'' for any  global  field $K$ without real places
(that is, a totally imaginary number field or a  global  function field);
see \cite[Theorem 5.11]{Borovoi-Memoir} in characteristic 0,
and \cite[Theorem 6.5.1]{BK} in arbitrary characteristic.
See also Labesse \cite[Section 1.6]{Labesse}, where a generalization to {\em quasi-connected} reductive groups is considered,
as well as Kottwitz~\cite[Proposition 6.4]{Kottwitz-Duke} and Gonz\'alez-Avil\'es~\cite[Theorem 5.8(1)]{GA},
where the corresponding abelian group structures are constructed in (different) non-functorial ways.

In this note we prove that when  $K$ is the field of real numbers  $\R$, or $K$ is a number field admitting a real embedding,
the answer to Question \ref{q:group-structure} is ``No''.
Our main result is the following theorem:

\begin{theorem}\label{t:no-gp-intro}
In the cases
\begin{enumerate}
\item [\rm (a)] when $K=\R$,  and
\item [\rm (b)] when $K$ is a number field admitting a real embedding,
\end{enumerate}
it is impossible to define a functorial in $G$ group structure
on the pointed sets $\Hon(K,G)$ for all reductive $K$-groups $G$ in such a way
that when $G=T$ is a $K$-torus, the new group structure on $\Hon(K,T)$ coincides with the standard one.
\end{theorem}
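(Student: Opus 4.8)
The plan is to exhibit a single reductive $K$-group $G$, a $K$-torus $T$, and a $K$-homomorphism $\iota\colon T\to G$ for which no group structure on the sets $\Hon(K,-)$ can be at once functorial along $\iota$ and standard on tori; the obstruction will already be visible over a fixed real place $v\colon K\hookrightarrow\R$ (take $v=\mathrm{id}$ when $K=\R$), so cases (a) and (b) get treated together. I would take the quadratic forms $q_0=\langle 1,1,1,1\rangle$ and $q_1=\langle 1,1\rangle$ over $K$, put $G=\mathrm{SO}(q_0)=\mathrm{SO}_4$ and $T=\mathrm{SO}(q_1)\times\mathrm{SO}(q_1)$, and let $\iota$ be the block-diagonal homomorphism attached to the orthogonal decomposition $q_0=q_1\perp q_1$. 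Here $\mathrm{SO}(q_1)$ is a one-dimensional $K$-torus, split by $K(\sqrt{-1})$, and it is non-split because $-1$ is not a square in $\R$; thus $T$ is a $K$-torus, and the hypothesis forces the group structure on $\Hon(K,T)\cong\Hon(K,\mathrm{SO}(q_1))^{2}$ to be the product of the standard ones.

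I would then recall the standard dictionary. First, $\Hon(K,\mathrm{O}(q))$ is the set of isometry classes of nondegenerate $K$-quadratic forms of dimension $\dim q$, and the natural map $\Hon(K,\mathrm{SO}(q))\to\Hon(K,\mathrm{O}(q))$ sends a class to its underlying quadratic form; in particular, two classes in $\Hon(K,\mathrm{SO}(q))$ with non-isometric underlying forms are distinct, and a class is trivial iff its underlying form is isometric to $q$. Second, the homomorphism $\iota$ induces on cohomology the orthogonal-sum operation on twisted forms. Third, $\Hon(K,\mathrm{SO}(q_1))\cong K^{\times}/N_{K(\sqrt{-1})/K}K(\sqrt{-1})^{\times}$, an abelian group of exponent $2$ (since $x^{2}=N_{K(\sqrt{-1})/K}(x)$), in which the class $t$ of the binary form $\langle -1,-1\rangle$ is the image of $-1$. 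The only consequences I need are: $2t=0$; $\iota_*(t,0)$ is the class of $\langle -1,-1,1,1\rangle$ and $\iota_*(0,t)$ the class of $\langle 1,1,-1,-1\rangle$, so $\iota_*(t,0)=\iota_*(0,t)$ because these two forms are isometric over $K$; and $\iota_*(t,t)$ is the class of $\langle -1,-1,-1,-1\rangle$.

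Then I would assume a functorial group structure as in the theorem and derive a contradiction. Write $\Hon(K,G)$ additively with neutral element $0=[\langle 1,1,1,1\rangle]$, and set $a=\iota_*(t,0)=\iota_*(0,t)$ and $b=\iota_*(t,t)$. Since $\iota_*$ is a group homomorphism and, in the standard group $\Hon(K,T)$, one has $(t,0)+(0,t)=(t,t)$ and $(t,0)+(t,0)=(0,0)$ (by $2t=0$), it follows on one hand that $b=\iota_*(t,0)+\iota_*(0,t)=a+a$, and on the other that $a+a=\iota_*(t,0)+\iota_*(t,0)=\iota_*\big((t,0)+(t,0)\big)=\iota_*(0,0)=0$; hence $b=0$. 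But $b\neq 0$, because $\langle -1,-1,-1,-1\rangle$ is not isometric to $\langle 1,1,1,1\rangle$ over $K$ — these forms are not even isometric over $\R$ via $v$, one being positive-definite and the other negative-definite, so they have distinct images in $\Hon(K,\mathrm{O}_4)$. This contradiction proves Theorem~\ref{t:no-gp-intro}; case (a) is the instance $K=\R$, where $K^{\times}/N=\R^{\times}/\R_{>0}$ has order $2$.

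The only genuinely creative step is locating this example: an anisotropic $\mathrm{SO}_4$ together with the orthogonal-sum map from a rank-two torus, arranged so that two of the torus classes are identified in $\Hon(K,G)$ while their sum is not. Everything else is the short computation above; it uses only the quadratic-form–cohomology dictionary and the classification of real quadratic forms at the single place $v$, and nothing about the global arithmetic of $K$ beyond the existence of a real embedding — which is precisely why the number-field case requires no separate treatment.
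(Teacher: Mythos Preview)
Your argument is correct and takes a different route from the paper. The paper works with $G=\SU_4$ and a maximal compact torus and argues by cardinality: since $\#\Hon(\R,\SU_4)=3$ is odd while $\Hon(\R,T)$ has exponent $2$, no homomorphism can carry a nontrivial order-$2$ element to a nontrivial element. For number fields the paper must then invoke the Hasse principle for simply connected groups (Kneser--Harder--Chernousov) to show that $\#\Hon(K,G)$ remains odd, which is why cases (a) and (b) are treated separately. Your approach with $G=\mathrm{SO}_4$ and $T=\mathrm{SO}_2\times\mathrm{SO}_2$ sidesteps all of this: you exploit the specific coincidence $\iota_*(t,0)=\iota_*(0,t)$ together with the relations $(t,0)+(0,t)=(t,t)$ and $2(t,0)=0$ in $\Hon(K,T)$ to force $\iota_*(t,t)=0$, and then check at a single real place that $\langle-1,-1,-1,-1\rangle\not\cong\langle1,1,1,1\rangle$. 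This is more elementary---no Hasse principle, no surjectivity result for $i_*$ from \cite{Borovoi-CiM}, and a uniform treatment of (a) and (b)---while the paper's argument has the virtue of applying at once to any simply connected $G$ with all $\#\Hon(\R,G_v)$ odd. One small point worth tightening: to deduce $\iota_*(t,0)=\iota_*(0,t)$ from the isometry of the underlying $4$-dimensional forms you need the map $\Hon(K,\mathrm{SO}(q_0))\to\Hon(K,\mathrm{O}(q_0))$ to be \emph{injective}, which is standard but slightly stronger than the facts you listed; alternatively, just note that the block-swap permutation matrix lies in $\mathrm{SO}_4(K)$ and conjugates the cocycle for $(t,0)$ to that for $(0,t)$, so the two classes agree without appealing to injectivity.
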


 The remainder of this note is structured as follows.
 In Section \ref{s:generalities} we recall the definition of the first Galois cohomology set.
 In Section \ref{s:R} we consider  Galois cohomology over $\R$ and prove Theorem \ref{t:no-gp-intro}(a).
 In Section \ref{s:number}  we consider  Galois cohomology over a number field and prove Theorem \ref{t:no-gp-intro}(b).

\section{Generalities on Galois cohomology}
\label{s:generalities}

\begin{subsec}
Let $K$ be a field. We denote by $\ov K$ an algebraic closure of $K$, and by $K^s$ the separable closure of $K$ in $\ov K$.
By $\Gal(K^s/K)$ we denote the absolute Galois group of $K$.

In this note, by an algebraic group we  always mean a {\em linear} algebraic group,
and by a $K$-group we  mean a linear algebraic group over $K$.
Let $G$ be a $K$-group, not necessarily commutative.
We work with the first Galois cohomology set
\[ \Hon(K,G)=\Zl^1(K,G)/\sim\]
see Serre \cite{Serre}.
Recall that $\Zl^1(K,G)$ is the set of 1-{\em cocycles}, that is, of  locally constant maps
\[c\colon \Gal(K^s/K)\to G(K^s)\]
satisfying the {\em cocycle condition}
\[c(\gamma_1,\gamma_2)=c(\gamma_1)\cdot {}^{\gamma_1}c(\gamma_2)\quad\ \text{for all}\ \,\gamma_1,\gamma_2\in\Gal(K^s/K).\]
Two cocycles  $c,c'$ are {\em equivalent} (we write $c\sim c'$)
if there exists $g\in G(K^s)$ such that
\[ c'(\gamma)=g^{-1}c(\gamma)\,^\gamma\! g
     \quad\ \text{for all}\ \, \gamma\in\Gal(K^s/K).\]
\end{subsec}

\begin{subsec}\label{ss:Func}
We describe the functoriality of $\Hon(K,G)$ in $G$.
Let $\varphi\colon G\to H$ be a homomorphism of $K$-groups.
The induced map
\[\varphi_*\colon \Hon(K,G)\to\Hon(K,H)\]
sends the class of a cocycle $c\in\Zl^1(K,G)$ to the class of $\varphi\circ c\in\Zl^1(K,H)$.
\end{subsec}

\section{Galois cohomology over $\R$}
\label{s:R}

For a (connected) reductive $\R$-group $G$, by abuse of terminology
we say that $G$ is {\em compact} if the Lie group $G(\R)$ is compact.
It is well known that $G$ is compact if and only if it is anisotropic.

\begin{theorem}\label{t:no-gp-R}
Let $G$ be a semisimple $\R$-group and $i\colon T\into G$ be a maximal compact torus.
Assume that $\Hon(\R,G)\neq 1$ and that $\#\Hon(\R,G)$ is odd.
Then $\Hon(\R,G)$ admits no group structure
compatible with the structure of pointed set and such that
\[i_*\colon \Hon(\R,T)\to\Hon(\R,G)\]
is a  group homomorphism.
\end{theorem}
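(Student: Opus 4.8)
The plan is to argue by contradiction through an order comparison. The key points are that $\Hon(\R,T)$ is a finite elementary abelian $2$-group, that $i_*$ is surjective, and that $\Hon(\R,G)$ has odd order $>1$; a surjective homomorphism from a $2$-group onto a group of odd order $>1$ cannot exist.

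First I would determine $\Hon(\R,T)$. Since $T$ is compact it is $\R$-anisotropic, so complex conjugation $\sigma$ acts on the character lattice $X^*(T)\cong\Z^n$ with trivial fixed sublattice; an integral involution with no nonzero fixed vector must be $-\operatorname{id}$, so $T$ is isomorphic to a product of $n$ copies of the norm-one torus $U:=R^{(1)}_{\C/\R}\mathbb{G}_m$. From the exact sequence $1\to U\to R_{\C/\R}\mathbb{G}_m\xrightarrow{\,N\,}\mathbb{G}_m\to1$ and the vanishing $\Hon(\R,R_{\C/\R}\mathbb{G}_m)\cong\Hon(\C,\mathbb{G}_m)=1$ (Shapiro's lemma and Hilbert~90) one gets $\Hon(\R,U)\cong\R^\times/N_{\C/\R}(\C^\times)=\R^\times/\R_{>0}\cong\Z/2\Z$, hence $\Hon(\R,T)\cong(\Z/2\Z)^n$: a finite group of order $2^n$ killed by $2$.

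Next I would invoke the classical fact that, for a semisimple $\R$-group $G$ with maximal compact torus $T$, the map $i_*\colon\Hon(\R,T)\to\Hon(\R,G)$ is surjective (every cohomology class of $G$ is represented by a cocycle valued in a maximal compact torus, and these tori form a single $G(\R)$-conjugacy class, with inner automorphisms acting trivially on $\Hon$). Suppose now, for contradiction, that $\Hon(\R,G)$ carried a group structure with the neutral element as identity for which $i_*$ is a group homomorphism. Then $\#\Hon(\R,G)=\#\operatorname{Im}(i_*)$ divides $\#\Hon(\R,T)=2^n$; but $\#\Hon(\R,G)$ is odd, so $\#\Hon(\R,G)=1$, contradicting $\Hon(\R,G)\neq1$. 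I expect the surjectivity of $i_*$ to be the only substantive ingredient; I would quote it, and observe that one only needs its weak consequence that $i_*$ is non-constant once $\Hon(\R,G)\neq1$. (The hypothesis that the group structure be ``compatible with the structure of pointed set'' is used precisely to know that the identity is the neutral element, so that $i_*$---always a morphism of pointed sets---being a group homomorphism has content and makes $\operatorname{Im}(i_*)$ a subgroup of $\Hon(\R,G)$.)
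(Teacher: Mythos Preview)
Your proof is correct and follows essentially the same route as the paper's: both rely on the surjectivity of $i_*$ (which the paper cites from \cite{Borovoi-CiM}) together with the fact that $\Hon(\R,T)$ is $2$-torsion, and then derive a contradiction from the odd cardinality of $\Hon(\R,G)$. The only cosmetic difference is that the paper tracks a single element (an order-$2$ class $\xi_T$ mapping to a class $\xi$ of order neither $1$ nor $2$), whereas you phrase the same incompatibility as a divisibility statement $\#\Hon(\R,G)\mid 2^n$; your extra computation identifying $T\cong U^n$ and $\Hon(\R,T)\cong(\Z/2\Z)^n$ is correct but more than is needed, since only the $2$-torsion property is used.
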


\begin{proof}
Let $\xi\in\Hon(\R,G)$, $\xi\neq 1$.
The map
\[i_*\colon \Hon(\R,T)\to\Hon (\R,G)\]
is surjective by  \cite[Lemma 2.1(b) and Theorem 3.1]{Borovoi-CiM}.
Therefore, $\xi=i_*(\xi_T)$ for some $\xi_T\in\Hon(\R,T)$, $\xi_T\neq 1$.
We have $\xi_T^2=1$; see, for instance, \cite[Lemma 2.1(a)]{Borovoi-CiM}.
Thus $\xi_T$ is of order 2.
On the other hand, for any group structure on $\Hon(\R,G)$, since $\xi\neq 1$, the order of $\xi$
is not 1, and since $\#\Hon(\R,G)$ is odd, the order of $\xi$ is not 2.
Thus the image of the element $\xi_T$ of order 2 is not of order 1 or 2, and therefore
$i_*$ is not a group homomorphism.
\end{proof}

\begin{example}\label{ex-SU-R}
Consider the compact simply connected $\R$-group $G=\SU_4$, that is, the special unitary group  of the  Hermitian form
\begin{equation}\label{e:Hermitian}
 \mathcal{F}(x,y)=x_1\uptau\hm y_1+ x_2\uptau\hm y_2 + x_3\uptau\hm y_3+x_4\uptau\hm y_4
\end{equation}
where $\tau\colon \C\to \C$ denotes the complex conjugation in the field of complex numbers $\C$.
The pointed set  $\Hon(\R,\SU_4)$ classifies Hermitian  $4\times 4$ -matrices with determinant 1.
There are 3 equivalence classes of such matrices, with representatives
\[\diag(1,1,1,1),\quad \diag(-1,-1,1,1),\quad\diag(-1,-1,-1,-1).\]
Thus $\# \Hon(\R,G)=3$.
Let $i\colon T\into G$ be a compact maximal torus (say, the diagonal torus).
By Theorem \ref{t:no-gp-R}, the pointed set $\Hon(\R,G)$ has no groups structure
such that $i_*$ is a group homomorphism.
\end{example}

Theorem \ref{t:no-gp-R} together with Example \ref{ex-SU-R} prove Theorem \ref{t:no-gp-intro}(a).

\section{Galois cohomology over a number field}
\label{s:number}

Let $K$ be a number field, that is, a finite extension of the field of rational numbers $\Q$.
Let $\V_\R(K)$ denote the set of real places of $K$.
We identify a real place $v\in \V_\R(K)$ with the corresponding  embedding $\phi_v\colon K\into\R$.
If $G$ is a $K$-group, we write $G_v\coloneqq G\times_{K,\phi_v}\! \R$ for the corresponding $\R$-group.

\begin{theorem}\label{t:no-gp-K}
Let $K$ be a number field, and suppose that $\V_\R(K)$ is not empty.
Let $G$ be a {\emm simply connected} semisimple $K$-group
such that  the cardinality $\#\Hon(\R, G_v)$ is odd for all $v\in\V_\R(K)$,
and that for some $v_0\in \V_\R(K)$  we have $\Hon(\R,G_{v_0})\neq 1$.
Let $i\colon T\into G$ be a torus that splits over a quadratic extension $L/K$
and such that $i_{v_0}\colon T_{v_0}\into G_{v_0}$ is a maximal compact torus of $G_{v_0}$.
Then $\Hon(K,G)$ admits no group structure
compatible with the structure of pointed set and such that
\[i_*\colon\Hon(K,T)\to\Hon(K,G)\]
is a group homomorphism.
\end{theorem}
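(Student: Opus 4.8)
The plan is to reduce the assertion over the number field $K$ to the already-established assertion over $\R$ at the place $v_0$ (Theorem \ref{t:no-gp-R}), using the functoriality of Galois cohomology with respect to the base change $K\to\R$, $\phi_{v_0}$. Suppose, for contradiction, that $\Hon(K,G)$ carries a group structure, compatible with its pointed-set structure, for which $i_*\colon\Hon(K,T)\to\Hon(K,G)$ is a group homomorphism. Extension of scalars along $\phi_{v_0}$ gives a map of pointed sets $\loc_{v_0}\colon\Hon(K,G)\to\Hon(\R,G_{v_0})$, functorial in $G$, hence fitting into a commutative square with $i_*$ on the left, $(i_{v_0})_*$ on the right, and the localization maps for $T$ and $G$ horizontally. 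The crux is to transport the group structure on $\Hon(K,G)$ down to $\Hon(\R,G_{v_0})$ so as to contradict Theorem \ref{t:no-gp-R}, whose hypotheses ($G_{v_0}$ semisimple, $i_{v_0}$ a maximal compact torus, $\#\Hon(\R,G_{v_0})$ odd and $\neq 1$) are all in force by assumption.

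The main step, and the main obstacle, is to show that the localization map $\loc_{v_0}\colon\Hon(K,G)\to\Hon(\R,G_{v_0})$ is a \emph{bijection}, or at least that it admits a section compatible with the torus. Once $\loc_{v_0}$ is bijective, we simply transport the group structure: declare $\alpha\cdot\beta:=\loc_{v_0}\bigl(\loc_{v_0}^{-1}(\alpha)\cdot\loc_{v_0}^{-1}(\beta)\bigr)$ on $\Hon(\R,G_{v_0})$. This makes $\loc_{v_0}$ a group isomorphism sending neutral to neutral, so the new structure is compatible with the pointed set; and since $(i_{v_0})_*\circ\loc_{v_0}^T=\loc_{v_0}\circ i_*$ with $i_*$ a homomorphism and $\loc_{v_0}^T\colon\Hon(K,T)\to\Hon(\R,T_{v_0})$ surjective (which I would deduce from the fact that $T$ splits over the quadratic extension $L/K$, so $\Hon(K,T)$ and $\Hon(\R,T_{v_0})$ are computed by the same $\Gal(L/K)=\Gal(\C/\R)=\Z/2$-module of cocharacters, the real place $v_0$ necessarily being non-split in $L$), the map $(i_{v_0})_*$ is a group homomorphism for the new structure. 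This contradicts Theorem \ref{t:no-gp-R}.

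To establish bijectivity of $\loc_{v_0}$, I would invoke the Hasse principle for simply connected semisimple groups: by the theorem of Kneser--Harder--Chernousov, for $G$ simply connected semisimple over a number field $K$ the product of localization maps
\[
\Hon(K,G)\ \isoto\ \prod_{v\in\V_\R(K)}\Hon(\R,G_v)
\]
is a bijection of pointed sets (the contributions from non-archimedean places vanish by Kneser, and from complex places trivially). Thus if $\V_\R(K)=\{v_0\}$ the map $\loc_{v_0}$ is itself a bijection and we are done. In general $\V_\R(K)$ may have several elements; then I would not use $\loc_{v_0}$ alone but rather the full product map, transporting the group structure to $\prod_v\Hon(\R,G_v)$ and composing with the projection to the $v_0$-factor — but a product of pointed sets need not inherit a group structure making each projection a homomorphism unless we know more. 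The cleanest fix is to choose $T$ more carefully: since the hypothesis only requires $i_{v_0}$ to be a maximal compact torus of $G_{v_0}$, I can arrange (and the theorem statement already supposes a torus $T$, split over a quadratic $L/K$, with this one local condition) that $i_{v_0}$ is a maximal compact torus while placing no constraint at the other real places; then the element $\xi\in\Hon(\R,G_{v_0})$ of odd order $\neq 1$ produced as in the proof of Theorem \ref{t:no-gp-R} lifts through the Hasse bijection to $\tilde\xi\in\Hon(K,G)$, its preimage lifts to $\tilde\xi_T\in\Hon(K,T)$ of order $2$ (order $2$ because $\Hon(K,T)$ is killed by $2$, $T$ being split by a quadratic extension), and $i_*(\tilde\xi_T)=\tilde\xi$ whose image in $\Hon(\R,G_{v_0})$ is $\xi$. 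Under any group structure on $\Hon(K,G)$, the element $\tilde\xi$ has order a multiple of the order of $\xi$ in the (transported) structure on the $v_0$-factor — here I do need $\loc_{v_0}$ to be injective, which the Hasse principle supplies — so $\tilde\xi$ has even-or-unity order conflicting with the parity/nontriviality exactly as in Theorem \ref{t:no-gp-R}. The delicate point to get right is precisely this order bookkeeping across the product of places, and that is where I expect to spend the most care.
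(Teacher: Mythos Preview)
Your approach has a genuine gap: the claim that $\loc_{v_0}\colon\Hon(K,G)\to\Hon(\R,G_{v_0})$ is injective is false in general. The Hasse principle gives a bijection $\Hon(K,G)\isoto\prod_{v\in\V_\R(K)}\Hon(\R,G_v)$, so $\loc_{v_0}$ is the projection onto one factor of a product --- hence \emph{surjective}, but not injective unless all the other factors are trivial, and the theorem does not assume this. You use injectivity twice in your final paragraph. First, you lift $\xi$ to $\tilde\xi\in\Hon(K,G)$ via Hasse and separately lift a preimage $\xi_T\in\Hon(\R,T_{v_0})$ to $\tilde\xi_T\in\Hon(K,T)$, then assert $i_*(\tilde\xi_T)=\tilde\xi$; but the commutative square only yields $\loc_{v_0}(i_*(\tilde\xi_T))=\loc_{v_0}(\tilde\xi)$, which is insufficient without injectivity. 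Second, you speak of the order of $\xi$ ``in the transported structure on the $v_0$-factor'' and of $\tilde\xi$ having order a multiple of it, but without a bijection there is no transported structure on $\Hon(\R,G_{v_0})$ at all, and $\loc_{v_0}$ is not assumed to be a group homomorphism for any group structure on the target.

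The paper sidesteps all of this by never leaving $\Hon(K,G)$. It \emph{defines} $\xi\coloneqq i_*(\xi_T)$ for a $\xi_T\in\Hon(K,T)$ chosen (via surjectivity of the composite $\Hon(K,T)\to\Hon(K_{v_0},G)$) so that $\loc_{v_0}(\xi)=\xi_{v_0}\neq 1$; thus the equality $\xi=i_*(\xi_T)$ holds by construction, not by an injectivity argument. Then, instead of transporting the group structure down to $\R$, it reads off from the Hasse bijection that $\#\Hon(K,G)=\prod_{v\in\V_\R(K)}\#\Hon(\R,G_v)$ is a product of odd numbers, hence odd. So in \emph{any} group structure on $\Hon(K,G)$ the nontrivial element $\xi$ has odd order~$>1$, while $\xi_T$ has order~$2$ since $\Hon(K,T)$ is killed by~$2$. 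This is precisely the order-counting argument you carry out correctly in the single-real-place case, but executed directly at the level of $K$ rather than via an (unavailable) reduction to $\R$.
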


\begin{proof}
Consider the commutative diagram
\[
\xymatrix@C=13mm{
\Hon(K,T)\ar[r]^-{i_*}\ar[d]_-{\loc_{v_0}}  &\Hon(K,G)\ar[d]^-{\loc_{v_0}}\\
\Hon(K_{v_0},T)\ar[r]^-{i_{v_0,*}}            &\Hon(K_{v_0},G)
}
\]
where $K_{v_0}\cong\R$. In this diagram, the vertical arrows are surjective
by \cite[Proposition 6.17 on p.~337]{PR}.
The bottom horizontal arrow is surjective;
see the reference in the proof of Theorem \ref{t:no-gp-R}.
Thus the composite map $\Hon(K,T)\to \Hon(K_{v_0},G)$ in the diagram is surjective.

By assumption there exists a non-trivial element $\xi_{v_0}\in \Hon(K_{v_0},G)$.
Then $\xi_{v_0}=\loc_{v_0}(\xi)$ for some  $\xi\in \Hon(K,G)$
such that  $\xi=i_*(\xi_T)$ for some $\xi_T\in\Hon(K,T)$.
Since $\xi_{v_0}\neq 1$, we see that $\xi\neq 1$ and $\xi_T\neq 1$.

Since the $K$-torus $T$ splits over the extension $L/K$,
by \cite[Lemma (1.9.2)]{Sansuc} we have $\Hon(K,T)=\Hon(L/K,G)$.
Since $L/K$ is a quadratic extension, by \cite[Chapter IV, Section 6, Corollary 1 of Proposition 8]{CF}
we have $\xi_T^2=1$.
Since  $\xi_T\neq 1$, we see that  the order of $\xi_T$ is 2.
On the other hand, since $G$ is simply connected, by the Hasse principle for $G$
(Kneser-Harder-Chernousov, see \cite[Theorem 6.6 on p.~286]{PR}\hs),
there is a bijection
\[\Hon(K,G)\isoto\!\!\! \prod_{v\in \V_\R(K)}\!\! \Hon(K_v,G),\]
whence $\Hon(K,G)$ is a finite set of cardinality
\[\#\Hon(K,G)\, =\!\!\prod_{v\in\V_\R(K)}\!\!\#\Hon(K_v,G).\]
Since by assumption  $\#\Hon(K_v,G)$ is odd for all $v\in \V_\R(K)$,
we see that $\#\Hon(K,G)$ is odd.
For any group structure on $\Hon(K,G)$, the element $\xi$ is not of order 1 because $\xi\neq 1$,
and not of order 2 because $\#\Hon(K,G)$ is odd.
Thus $\xi_T\in \Hon(K,T)$ is of order 2, but $\xi=i_*(\xi_T)\in \Hon(K,G)$ is not of order 1 or 2.
We conclude that $i_*$ is not a group homomorphism.
\end{proof}

\begin{example}\label{ex:number-SU4}
Let $K$ be a number field such that $\V_\R(K)$ is not empty.
Let $a\in K^\times$ be an element such that $\phi_v(a)\in\R$
is positive for all $v\in\V_\R(K)$, for example, $a=1$.
Set $L=K(\sqrt{-a}\hs)$.
Let $G=\SU_{4,L/K}$\hs, the special unitary group
of the Hermitian form \eqref{e:Hermitian} on the 4-dimensional vector space
$L^4$ where $\tau\colon L\to L$ is the nontrivial element of the Galois group $\Gal(L/K)$ of order 2.
Then for each $v\in \V_\R(K)$, the  $\R$-group $G_v$ is a compact $\R$-group isomorphic to $\SU_4$,
and therefore $\#\Hon(K_v,G)=\#\Hon(\R, G_v)=3$; see Example \ref{ex-SU-R}.
Let $i\colon T\into G$ be the diagonal torus in $G=\SU_{4,L/K}$.
Then $T$ splits over the quadratic extension $L/K$, and for any $v\in \V_\R(K)$
the corresponding torus $T_v$ is a compact maximal torus of $G_v$.
By Theorem \ref{t:no-gp-K}, there is no group structure on $\Hon(K,G)$
such that $i_*$ is a group homomorphism.
\end{example}

Theorem \ref{t:no-gp-K} together with Example \ref{ex:number-SU4} prove Theorem \ref{t:no-gp-intro}(b).

\section*{Acknowledgements}
The author thanks Zinovy Reichstein for most helpful email correspondence.
This note was partially written during the author’s visit
to Max Planck Institute for Mathematics, Bonn,
and he thanks this institute for its hospitality, support, and
excellent working conditions.

\end{document}